\theoremstyle{definition}
\newtheorem{Def}{Definition}[section]
\newtheorem{Thm}[Def]{Theorem}
\newtheorem{Rem}[Def]{Remark}
\newtheorem{Ex}[Def]{Example}
\newtheorem{Cor}[Def]{Corollary}
\newtheorem{Tab}[Def]{Table}
\numberwithin{equation}{section}
\begin{document}

\title{On theta series attached to the Leech lattice}

\author{Shoyu Nagaoka and Sho Takemori}
\maketitle



\noindent
%
\begin{abstract}
Some congruence relations satisfied by the theta series associated with the Leech lattice are
given. 

\end{abstract}
\maketitle

\section{Introduction}
The main object of this note is concerned with the degree $n$ theta series
$\vartheta_{\varLambda}^{(n)}$ associated with a rank $m$ unimodular lattice $\varLambda=\varLambda_m$
(or equivalently, degree $m$ symmetric matrix $S=S_m$). It is known that the theta series 
$\vartheta_{\varLambda}^{(n)}$ becomes a Siegel modular form of weight $\frac{m}{2}$
for $\Gamma_n:=Sp_n(\mathbb{Z})$. Now we consider the case that $\varLambda=\mathcal{L}$: 
the Leech lattce.
The Leech lattice is an even unimodular 24-dimensional lattice (cf. $\S$ \ref{thetaLeech}). 
Therefore $\vartheta_{\mathcal{L}}^{(n)}$
is a Siegel modular form of weight 12 for $\Gamma_n$. The main purpose is to show that
the theta series $\vartheta_{\mathcal{L}}^{(2)}$ satisfies the following congruence relation:
\[
\varTheta(\vartheta_{\mathcal{L}}^{(2)}) \equiv 0 \pmod{23}\qquad ({\rm Theorem}\; \ref{mainTH2}).
\]
where $\varTheta$ is the theta operator defined in $\S$ \ref{thetaoperator}.
In this note, we present two different kinds of the proof. 
The first proof depends on the fact that the image of $\varTheta(\vartheta_{\mathcal{L}}^{(2)})$
under the theta operator is congruent to a cusp form (cf. $\S$ \ref{first}). The second proof
is based on the fact that $\varTheta(\vartheta_{\mathcal{L}}^{(2)})$ is congruent to the theta
series associated with the binary quadratic form of discriminant $-23$ (cf. $\S$ \ref{second}).

It should be noted that a similar congruence relation appeared in \cite{K-K-N}. 
That is, the following congruence relation was proved:
\[
\varTheta(X_{35}) \equiv 0 \pmod{23}.
\]
where $X_{35}$ is the Igusa cusp form of weight 35 (cf. $\S$ \ref{generators}).

In $\S$ 3.5, we introduce another congruence relation satisfied by $\vartheta_{\mathcal{L}}^{(2)}$:
\[
\vartheta_{\mathcal{L}}^{(2)} \equiv 1 \pmod{13}\qquad ({\rm Theorem}\; \ref{anothercongruence}).
\]
This gives an example of weight $p-1$ modular form $F$ satisfying
$F \equiv 1 \pmod{p}$ (e.g.\,cf.\;\cite{B-N-1}).

\section{Preliminaries}
\subsection{Notation}
\label{notation}
First we confirm notation. Let $\Gamma _n=Sp_n(\mathbb{Z})$ be the Siegel modular group of degree $n$ and $\mathbb{H}_n$ 
the Siegel upper-half space of degree $n$. We denote by
$M_k(\Gamma _n)$ the $\mathbb{C}$-vector space of all
Siegel modular forms of weight $k$ for $\Gamma_n$, and $S_k(\Gamma _n)$ is the subspace of cusp forms.

Any $F(Z)$ in $M_k(\Gamma _n)$ has a Fourier expansion of the form
\[
F(Z)=\sum_{0\leq T\in Sym_n^*(\mathbb{Z})}a(F;T)q^T,\quad q^T:=\text{exp}(2\pi i\text{tr}(TZ)),
\quad Z\in\mathbb{H}_n,
\]
where
\[
Sym_n^*(\mathbb{Z}):=\{ T=(t_{lj})\in Sym_n(\mathbb{Q})\;|\; t_{ll},\;2t_{lj}\in\mathbb{Z}\; \}.\\
\]
Namely we write the Fourier coefficient corresponding to $T\in Sym_n^*(\mathbb{Z})$ as $a(F;T)$.

For a subring $R$ of $\mathbb{C}$, let $M_{k}(\Gamma _n)_{R}\subset M_{k}(\Gamma _n)$ denote 
the space of all modular forms 
whose Fourier coefficients lie in $R$. 
\subsection{Formal Fourier expansion}
\label{FormalFourier}
For $T=(t_{lj})\in Sym_n^*(\mathbb{Z})$ and $Z=(z_{lj})\in\mathbb{H}_n$, we write 
$q_{lj}:=\text{exp}(2\pi iz_{lj})$. Then
\[
q^T=\text{exp}(2\pi i\text{tr}(TZ))=\prod_{l<j}q_{lj}^{2t_{lj}}\prod_{l=1}^nq_{ll}^{t_{ll}}.
\]
Therefore we may consider $F\in M_k(\Gamma_n)_R$ as an element of the formal power series ring:
\[
F=\sum a(F;T)q^T\in R[q_{lj},q_{lj}^{-1}][\![q_{11},\ldots ,q_{nn}]\!].
\]
For a prime $p$, we denote by $\mathbb{Z}_{(p)}$ the local ring at $p$. For two elements
\[
F_i=\sum a(F_i;T)q^T\in \mathbb{Z}_{(p)}[q_{lj},q_{lj}^{-1}][\![ q_{11},\ldots ,q_{nn}]\!],\;(i=1,\,2),
\]
we write $F_1 \equiv F_2 \pmod{p}$ if the congruence
\[
a(F_1;T) \equiv a(F_2;T) \pmod{p}
\]
satisfies for all $T\in Sym_n^*(\mathbb{Z})$.
\subsection{Theta operator}
\label{thetaoperator}
For a $F=\sum a(F;T)q^T\in M_k(\Gamma_n)$, we associate the formal power series
\[
\varTheta (F):=\sum a(F;T)\cdot\text{det}(T) q^T
\in\mathbb{C}[q_{lj},q_{lj}^{-1}][\![q_{11},\ldots ,q_{nn}]\!].
\]
It should be noted that $\varTheta (F)$ is not necessarily modular form. However the following
fact holds.
\begin{Thm} (B\"{o}cherer-Nagaoka \cite{B-N-1}, Theorem 4).
\label{B-N-1}
Assume that a prime $p$ satisfies $p\geq n+3$.
Then, for any modular form $F\in M_k(\Gamma_n)_{\mathbb{Z}_{(p)}}$, there
exists a Siegel cusp form $G\in S_{k+p+1}(\Gamma_n)_{\mathbb{Z}_{(p)}}$ satisfying
\[
\varTheta (F) \equiv G \pmod{p}
\]
as formal power series.
\end{Thm}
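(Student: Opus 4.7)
The proof would follow Serre's classical template for elliptic modular forms ($n=1$): there the Ramanujan theta operator $\theta f = q\,df/dq$ is corrected to a modular form by the Serre derivation $\theta f - (k/12)\,E_2 f \in M_{k+2}(\Gamma_1)$, and the congruence $E_{p-1} \equiv 1 \pmod{p}$ then lifts the result to weight $k+p+1$. For general $n$ one first reinterprets $\varTheta$ as a differential operator: the identity $\det(\partial/\partial Z)\, q^T = (2\pi i)^n \det(T)\, q^T$ (computed formally, since the entries of the symmetric matrix $\partial/\partial Z$ commute) shows that $\varTheta$ is, up to a constant, the operator $\det(\partial/\partial Z)$. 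This operator fails to be modular because $\partial/\partial Z$ transforms under $\Gamma_n$ by conjugation rather than scalar multiplication; however, adding Maass--Shimura correction terms polynomial in $Y^{-1} = (\mathrm{Im}\,Z)^{-1}$ produces a differential operator $\delta_k$ that sends $M_k(\Gamma_n)$ into the space of nearly holomorphic modular forms of weight $k+2$.

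The key arithmetic input is the Siegel Eisenstein congruence $E_{p-1}^{(n)} \equiv 1 \pmod{p}$, valid under the hypothesis $p \geq n+3$, which is where the bound in the theorem enters. Armed with this, I would mimic Serre's derivation by forming a bracket such as $(p-1)\,\delta_k F - k\, F \cdot \delta E_{p-1}^{(n)}$ (suitably normalized), and argue that the non-holomorphic contributions, which are polynomial in $Y^{-1}$ with coefficients involving $k$ and $p-1$ respectively, cancel exactly. The outcome is a genuine holomorphic form $H \in M_{k+p+1}(\Gamma_n)_{\mathbb{Z}_{(p)}}$ with $H \equiv \varTheta(F) \pmod{p}$.

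For the cusp-form conclusion, observe that $\det(T) = 0$ whenever $T$ is singular, so the Fourier coefficients of $\varTheta(F)$ vanish on the boundary of the positive cone; hence $\Phi(H) \equiv 0 \pmod{p}$, where $\Phi$ is the Siegel operator. Using surjectivity of $\Phi : M_{k+p+1}(\Gamma_n)_{\mathbb{Z}_{(p)}} \to M_{k+p+1}(\Gamma_{n-1})_{\mathbb{Z}_{(p)}}$ in the relevant weight range, I would write $\Phi(H) = p\,H'$, lift $H'$ to some $\widetilde{H}' \in M_{k+p+1}(\Gamma_n)_{\mathbb{Z}_{(p)}}$, and set $G := H - p\,\widetilde{H}'$; then $\Phi(G) = 0$, so $G \in S_{k+p+1}(\Gamma_n)_{\mathbb{Z}_{(p)}}$ and $G \equiv \varTheta(F) \pmod{p}$. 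The main obstacle is the explicit verification of the bracket identity in degree $n$: the operator $\delta_k$ is built from the determinant of a matrix of first-order operators together with Maass correction terms, and checking that the cancellation against $\delta E_{p-1}^{(n)}$ produces something holomorphic while keeping all emerging constants $p$-integral is where the real work lies.
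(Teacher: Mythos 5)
This theorem is not proved in the paper at all: it is imported verbatim from B\"ocherer--Nagaoka \cite{B-N-1} (Theorem 4 there), so there is no internal argument to compare yours against. Measured against the actual proof in the cited source, your sketch identifies the right skeleton: one replaces $\det(\partial/\partial Z)$ by a holomorphic Rankin--Cohen type bracket that raises weight by $2$, pairs $F$ against the Siegel--Eisenstein series $E_{p-1}^{(n)}\equiv 1\pmod p$ (this congruence, and the convergence/$p$-integrality of $E_{p-1}^{(n)}$, is indeed where $p\ge n+3$ enters), obtains a holomorphic form of weight $k+(p-1)+2=k+p+1$ congruent to $\varTheta(F)$, and then repairs cuspidality using that $\varTheta(F)$ is supported on nonsingular $T$, so its image under the Siegel operator vanishes mod $p$.

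Two points in your write-up are genuinely incomplete rather than merely unverified. First, the bracket $(p-1)\,\delta_kF-k\,F\,\delta E_{p-1}^{(n)}$ is the degree-one Serre bracket; for general $n$ the holomorphic bracket attached to the determinant is a sum of $n+1$ mixed-derivative terms $D_r(F,G)$, $0\le r\le n$, with coefficients polynomial in the two weights, and the argument only closes if the coefficient multiplying the pure term $\det(\partial)F\cdot G$ is a $p$-adic unit for the weights $(k,p-1)$ --- otherwise reducing mod $p$ does not recover $\varTheta(F)$ at all. You flag this as ``where the real work lies,'' which is honest, but it is the heart of the theorem, not a routine verification. Second, your cusp-form step quietly invokes surjectivity of the Siegel operator $\varPhi$ on the $\mathbb{Z}_{(p)}$-structures $M_{k+p+1}(\Gamma_n)_{\mathbb{Z}_{(p)}}\to M_{k+p+1}(\Gamma_{n-1})_{\mathbb{Z}_{(p)}}$; over $\mathbb{C}$ this is Klingen--Eisenstein theory in large weight, but the $p$-integral refinement (needed to divide by $p$ and lift) is itself a nontrivial input that must be justified, not assumed. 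So: correct strategy, consistent with the cited source, but with two substantive holes that would have to be filled before this counts as a proof.
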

In the case $n=1$, this operator was studied by Ramanujan and played an important role
in the theory of $p$-adic elliptic modular forms (\cite{Serre}).
\subsection{Theta series and Leech lattice}
\label{thetaLeech}
As usual, for a positive matrix $S\in 2Sym_m^*(\mathbb{Z})$, we associate the theta series on $\mathbb{H}_n$:
\[
\vartheta_S^{(n)}(Z):=\sum_{X\in M_{m,n}(\mathbb{Z})}\text{exp}(\pi i\text{tr}(S[X]Z))
\]
where $S[X]={}^t\!XSX$. It is well-known that
\[
\vartheta_S^{(n)}\in M_{\frac{m}{2}}(\Gamma_n)_{\mathbb{Z}}
\]
if $S$ is unimodular.

Let $\mathcal{L}$ be the Leech lattice (we identify it with the Gram matrix). The Leech lattice is
the unique lattice of rank 24, which contains no roots (e.g. cf. \cite{Ebeling}, Theorem 4.1). From this fact,
for example. we see that
\[
a\left( \vartheta_{\mathcal{L}}^{(2)};
\begin{pmatrix}m & \frac{r}{2} \\ \frac{r}{2} &1 \end{pmatrix} \right)
=
a\left( \vartheta_{\mathcal{L}}^{(2)};
\begin{pmatrix}1 & \frac{r}{2} \\ \frac{r}{2} &n \end{pmatrix} \right)
=0.
\]
\begin{Ex}
\label{Leechdegree1}
It is known that
\begin{align*}
\vartheta_{\mathcal{L}}^{(1)}&= (E_4^{(1)})^3-720\varDelta\\
&=1+196560q^2+16773120q^3+398034000q^4+\cdots \in M_{12}(\Gamma_1)_{\mathbb{Z}},\\
& (\text{e.g.\;cf.\;\cite{Skoruppa}}),
\end{align*}
where $\varDelta$ is the cusp form of weight 12 defined by
\begin{align*}
\varDelta &:=\frac{1}{1728}((E_4^{(1)})^3-(E_6^{(1)})^2)\\
           &=q-24q^2+252q^3-1472q^4+4830q^5-6048q^6-\cdots
           \in S_{12}(\Gamma_1)_{\mathbb{Z}}.
\end{align*}
\end{Ex}
\subsection{Sturm-type Theorem}
\label{sturm}
A Sturm-type theorem maintains that, if some of Fourier coefficients of $F$
vanish mod $p$, then $F$ is congruent zero mod $p$.

In the following, for simplicity, we use the abbreviation
\[
[m,r,n]:=
\begin{pmatrix}
m & \frac{r}{2}\\
\frac{r}{2} & n
\end{pmatrix},
\qquad
(m,\,n,\,r\in\mathbb{Z}).
\]

\begin{Thm}(D.~Choi, Y.~Choie, T.~Kikuta \cite{C-C-K}).
\label{C-C-K}
Let $p\geq 5$ be a prime. Suppose that $F(Z)\in M_k(\Gamma_2)_{\mathbb{Z}_{(p)}}$ has
the Fourier expansion
\[
F(Z)=\sum_{0\leq T=[m,r,n]}a([m,r,n])q^T.
\]
If
\[
a([m,r,n]) \equiv 0 \pmod{p}
\]
for any $m$, $n$ such that
\[
0\leq m\leq \frac{k}{10}\quad \text{and}\quad 0\leq n\leq \frac{k}{10},
\]
then $F \equiv 0 \pmod{p}$.
\end{Thm}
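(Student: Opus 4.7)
The natural strategy is to use the Fourier-Jacobi expansion of $F$ and then invoke the structure of the ring $M_{*}(\Gamma_{2})$ modulo $p$.

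First, I would write the Fourier-Jacobi expansion
\[
F(Z)=\sum_{m\geq 0}\phi_{m}(\tau,z)\exp(2\pi im\tau'),\qquad Z=\begin{pmatrix}\tau & z\\ z & \tau'\end{pmatrix},
\]
where each $\phi_{m}$ is a Jacobi form of weight $k$ and index $m$ with $p$-integral Fourier coefficients. The hypothesis implies that, for every $m\leq k/10$, the $(n,r)$-Fourier coefficient of $\phi_{m}$ vanishes mod $p$ for all $n\leq k/10$ and all admissible $r$. I would then invoke a Sturm-type theorem for Jacobi forms, obtained by passing to the theta decomposition $\phi_{m}=\sum_{\mu\bmod 2m}h_{\mu}(\tau)\theta_{m,\mu}(\tau,z)$ and applying the classical one-variable Sturm bound to the vector-valued half-integral-weight forms $(h_{\mu})$ on $\mathrm{SL}_{2}(\mathbb{Z})$. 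Since the discriminant $4mn-r^{2}$ ranges up to roughly $2mk/5$ for $m,n\leq k/10$, this range should comfortably exceed the Jacobi Sturm bound for index $m$, forcing $\phi_{m}\equiv 0\pmod{p}$ for every $m\leq k/10$.

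Next, the invariance of the Fourier coefficients under $GL_{2}(\mathbb{Z})$-equivalence of $T$ gives $a(F;[m,r,n])=a(F;[n,r,m])$, so the hypothesis is symmetric in $m$ and $n$. Applying the same argument to the Fourier-Jacobi expansion in the opposite off-diagonal variable shows that $F\bmod p$ vanishes to order at least $\lfloor k/10\rfloor+1$ simultaneously in $\exp(2\pi i\tau')$ and in $\exp(2\pi i\tau)$.

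The main obstacle is the final descent. Here I would invoke the Igusa cusp form $\chi_{10}\in S_{10}(\Gamma_{2})$, whose first nonzero Fourier-Jacobi coefficient occurs at index one and is essentially the unique Jacobi cusp form of weight $10$ and index $1$, which remains nonzero modulo $p$ for $p\geq 7$; the analogous property holds in the transposed variable. The bi-directional vanishing established above, combined with the known zero divisor of $\chi_{10}$ and a Koecher-type holomorphy argument, should force $F\bmod p$ to be divisible in $M_{*}(\Gamma_{2})_{\mathbb{F}_{p}}$ by $\chi_{10}^{\lfloor k/10\rfloor+1}$. Since this power already has weight strictly greater than $k$, the quotient lies in a space of negative weight and must vanish, giving $F\equiv 0\pmod{p}$.
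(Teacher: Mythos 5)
First, a point of order: the paper does not prove this statement at all. It is quoted (in a specialized form) from Choi--Choie--Kikuta \cite{C-C-K}, with the remark that the original result is more general, so there is no in-paper proof to compare yours against; I can only measure your sketch against the strategy of the cited work. Your overall plan --- Fourier--Jacobi expansion, a Sturm-type bound for each coefficient $\phi_m$ via the theta decomposition, then a descent showing that a form whose Fourier--Jacobi coefficients vanish mod $p$ up to index $k/10$ must vanish mod $p$ --- is indeed the shape of the argument in \cite{C-C-K}.

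However, your final descent step has a genuine gap. You claim that the vanishing of $\phi_m \bmod p$ for all $m\le k/10$ forces $F\bmod p$ to be divisible by $\chi_{10}^{\lfloor k/10\rfloor+1}$ in $M_*(\Gamma_2)_{\mathbb{F}_p}$. That mechanism is false: divisibility by $\chi_{10}$ is equivalent to vanishing on the reducible locus (the kernel of the Witt operator), not to the vanishing of low-index Fourier--Jacobi coefficients. Already $X_{12}$ is a weight $12$ cusp form whose Fourier--Jacobi expansion begins at index $1$ but which is \emph{not} divisible by $X_{10}$ (its Witt image is a nonzero multiple of $\varDelta(\tau)\varDelta(\tau')$), so even one step of your inductive divisibility fails. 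The correct statement is that a nonzero $F\bmod p$ has Fourier--Jacobi order at most $k/10$, and its proof needs the mod $p$ structure theorem $\widetilde{M}^{\mathrm{ev}}(\Gamma_2)\cong\mathbb{F}_p[X_4,X_6,X_{10},X_{12}]$ for $p\ge 5$ (this is where the hypothesis $p\ge 5$ actually enters --- your sketch never uses it) together with the algebraic independence of the leading Fourier--Jacobi coefficients $E_4^{(1)},E_6^{(1)},\phi_{10,1},\phi_{12,1}$ of the four generators: a monomial $X_4^aX_6^bX_{10}^cX_{12}^d$ has Fourier--Jacobi order exactly $c+d$ and weight at least $10(c+d)$, and no cancellation occurs among leading terms. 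Two smaller issues: the ``bi-directional'' symmetry in $m$ and $n$ buys you nothing (it is automatic from $GL_2(\mathbb{Z})$-invariance of the Fourier coefficients), and the assertion that the available discriminant range ``comfortably exceeds'' the Jacobi Sturm bound for index $m\le k/10$ is precisely the constant-chasing that must be carried out explicitly, since the relevant bound grows with the index $m$.
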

\begin{Rem}
In \cite{C-C-K}, the result was proved under more general situation.
\end{Rem}
\begin{Cor}
\label{weight12}
Suppose that $F\in M_{12}(\Gamma_2)_{\mathbb{Z}}$ satisfies
\[
a(F;T)=0
\]
for any $0\leq T\in Sym_2^*(\mathbb{Z})$ with $\text{tr}(T)\leq 2$, then $F=0$.
\end{Cor}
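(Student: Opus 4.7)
The plan is to invoke the Sturm-type Theorem \ref{C-C-K} at every prime $p \geq 5$ and then exploit integrality. For weight $k = 12$ the condition $0 \leq m, n \leq k/10 = 1.2$ amounts to $m, n \in \{0, 1\}$, and any such $T = [m,r,n] \geq 0$ automatically satisfies $\mathrm{tr}(T) = m + n \leq 2$. Hence the vanishing hypothesis of the corollary immediately implies the hypothesis of Theorem \ref{C-C-K} for every prime $p \geq 5$, and one concludes $F \equiv 0 \pmod{p}$ as a formal power series for all such $p$.

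Since by assumption $F \in M_{12}(\Gamma_2)_{\mathbb{Z}}$ has integer Fourier coefficients, the congruence $F \equiv 0 \pmod{p}$ for every prime $p \geq 5$ forces each coefficient $a(F;T)$ to be divisible by infinitely many primes and therefore to vanish. This gives $F = 0$.

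The only point needing any care is that the index set $\{T \geq 0 : m, n \in \{0, 1\}\}$ appearing in Theorem \ref{C-C-K} is genuinely contained in $\{T \geq 0 : \mathrm{tr}(T) \leq 2\}$. This is elementary: if $m = 0$ (or $n = 0$) then positivity forces $r = 0$, so $T = [0,0,n]$ with $n \leq 1$; if $m = n = 1$ then positivity forces $|r| \leq 2$; in all cases $m + n \leq 2$. Thus no genuine obstacle arises, and the corollary is essentially a direct repackaging of Theorem \ref{C-C-K} for $k=12$ together with the observation that an integer divisible by every prime $p \geq 5$ must be zero.
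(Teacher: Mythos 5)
Your proposal is correct and follows exactly the paper's own (one-line) argument: apply Theorem \ref{C-C-K} with $k=12$ at infinitely many primes $p\geq 5$ and use integrality of the Fourier coefficients to conclude $F=0$. The extra verification that the index set $m,n\in\{0,1\}$ from the theorem is covered by the hypothesis $\mathrm{tr}(T)\leq 2$ is a worthwhile detail the paper leaves implicit.
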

\begin{proof}
We can apply Theorem \ref{C-C-K} to the case $k=12$ and infinitely many $p$.
\end{proof}
\begin{Cor}
\label{weight36}
Suppose that $F\in M_{36}(\Gamma_2)_{\mathbb{Z}}$ satisfies
\[
a(F;T) \equiv 0 \pmod{23}
\]
for any $0\leq T\in Sym_2^*(\mathbb{Z})$ with $\text{tr}(T)\leq 6$, then
$F \equiv 0 \pmod{23}$.
\end{Cor}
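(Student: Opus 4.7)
The plan is to deduce this directly from Theorem \ref{C-C-K}, in exact parallel with the proof of Corollary \ref{weight12}. I would apply the theorem with $k=36$ and $p=23$. The hypothesis $p\geq 5$ is satisfied, and since $F\in M_{36}(\Gamma_2)_{\mathbb{Z}}\subset M_{36}(\Gamma_2)_{\mathbb{Z}_{(23)}}$, the form $F$ lies in the required space.

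The only thing to check is the numerical range: with $k=36$ we have $k/10=3.6$, so Theorem \ref{C-C-K} demands $a(F;[m,r,n])\equiv 0\pmod{23}$ for all $m,n\in\{0,1,2,3\}$ and all admissible $r$. For any such diagonal entries we have $\text{tr}(T)=m+n\leq 6$, which is precisely the range in which the corollary's hypothesis supplies the vanishing. Thus the hypothesis of Theorem \ref{C-C-K} is met, and the theorem yields $F\equiv 0\pmod{23}$.

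There is no genuine obstacle: the entire content sits in Theorem \ref{C-C-K}, and this corollary is a routine specialization. The only minor point worth stating explicitly in the write-up is the inclusion $\{(m,n):m,n\leq 3\}\subset\{(m,n):m+n\leq 6\}$, which is what allows the $\text{tr}(T)\leq 6$ hypothesis to cover all the Fourier coefficients required by the Sturm-type bound.
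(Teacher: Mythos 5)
Your proof is correct and matches the paper's intent: the paper states this corollary without a written proof, but it is clearly meant as the same direct specialization of Theorem \ref{C-C-K} (here with $k=36$, $p=23$, so $k/10=3.6$ forces $m,n\leq 3$ and hence $\mathrm{tr}(T)\leq 6$) that the paper spells out for Corollary \ref{weight12}. Nothing is missing.
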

\subsection{Congruences for binary theta series}
\label{Congbinary}
Assume that $p$ is a prime with $p \equiv 3 \pmod{4}$. Then there exists
$S\in Sym_2^*(\mathbb{Z})$ such that
\[
\text{det}(2S)=p,
\]
namely, the discriminant of $S$ is $-p$. For this $S$, we associate the theta
series $\vartheta_S^{(2)}$. Then $\vartheta_S^{(2)}$ is a weight $1$ modular form
for
\[
\Gamma_0^2(p):=\left\{\;\begin{pmatrix}A & B\\ C& D  \end{pmatrix}\in\Gamma_2\;|\;
C \equiv 0_2 \pmod{p}\; \right\}
\]
with character $\chi_p=\left( \frac{-p}{*} \right)$. Namely
\[
\vartheta_S^{(2)}\in M_1(\Gamma_0^2(p),\chi_p).
\]
The following statement is a special case of a result of B\"{o}cherer and Nagaoka 
(cf. \cite{B-N-2}, Theorem 5).
\begin{Thm}(S.~B\"{o}cherer, S.~Nagaoka)
\label{B-N-2}
Assume that $p\geq 7$ and $p \equiv 3 \pmod{4}$. Let $S\in Sym_2^*(\mathbb{Z})$ be a positive
definite binary quadratic form with $\text{det}(2S)=p$ (i.e. discriminant of $S=-p$.)
Then there exists a modular form $G\in M_{\frac{p+1}{2}}(\Gamma_2)_{\mathbb{Z}_{(p)}}$
such that
\[
\vartheta_S^{(2)} \equiv G \pmod{p}.
\]
\end{Thm}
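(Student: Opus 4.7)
\medskip

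\noindent\emph{Proof plan.}
My strategy is to absorb both the character $\chi_p$ and the weight gap $(p+1)/2 - 1 = (p-1)/2$ in a single step, by multiplying $\vartheta_S^{(2)}$ by a Siegel Eisenstein series $E$ of weight $(p-1)/2$ for $\Gamma_0^2(p)$ with character $\chi_p$ satisfying $E \equiv 1 \pmod{p}$. Since $\chi_p^2 = \mathbf{1}$, the product $F := E \cdot \vartheta_S^{(2)}$ will lie in $M_{(p+1)/2}(\Gamma_0^2(p))_{\mathbb{Z}_{(p)}}$ with trivial character and will satisfy $F \equiv \vartheta_S^{(2)} \pmod{p}$. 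The problem then reduces to lowering the level from $\Gamma_0^2(p)$ to $\Gamma_2$ modulo $p$.

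The candidate Eisenstein series is
\[
E(Z) := \sum_{\gamma \in \Gamma^2_\infty \backslash \Gamma_0^2(p)} \chi_p(\det D_\gamma)\,\det(C_\gamma Z + D_\gamma)^{-(p-1)/2},
\]
suitably normalized (and handled by Hecke's trick at the boundary $p = 7$, where absolute convergence just fails). The $p$-integrality and the congruence $E \equiv 1 \pmod{p}$ reduce, via Siegel's $\Phi$-operator and induction on the degree, to the classical elliptic statement: the weight $(p-1)/2$ Eisenstein series with quadratic character $\chi_p$ has constant term a $p$-adic unit multiple of the generalized Bernoulli number $B_{(p-1)/2,\chi_p}$, itself a $p$-unit by the Kummer--Serre congruences, and the higher Fourier coefficients become divisible by $p$ after this normalization.

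To descend the level I would form the trace
\[
\mathrm{Tr}(F) := \sum_{\gamma \in \Gamma_0^2(p) \backslash \Gamma_2} F\bigl|_{(p+1)/2}\,\gamma \;\in\; M_{(p+1)/2}(\Gamma_2).
\]
Because $[\Gamma_2 : \Gamma_0^2(p)] = (p+1)(p^2+1) \equiv 1 \pmod{p}$ is a $p$-adic unit, the averaged form $G := \mathrm{Tr}(F)/[\Gamma_2 : \Gamma_0^2(p)]$ remains in $M_{(p+1)/2}(\Gamma_2)_{\mathbb{Z}_{(p)}}$, and the target congruence $G \equiv \vartheta_S^{(2)} \pmod{p}$ will follow once one shows that every non-identity coset contributes a form with $q$-expansion at $i\infty$ in $p\,\mathbb{Z}_{(p)}[q_{12}^{\pm 1}][\![q_{11},q_{22}]\!]$. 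This is where I expect the main obstacle to lie: one must pick explicit representatives of $\Gamma_0^2(p) \backslash \Gamma_2$ --- essentially Atkin--Lehner-type matrices involving the prime $p$ --- and carry out a slash-and-expand analysis showing that the scaling by $p$ in these representatives forces the corresponding Fourier coefficients to be divisible by $p$. This coset-by-coset expansion argument is the technical heart of the construction and is the content of \cite{B-N-2}.
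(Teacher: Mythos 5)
The paper does not actually prove this statement: it is quoted verbatim as a special case of Theorem 5 of \cite{B-N-2}, with no internal proof, so there is nothing in the text to compare your argument against. Your two-step architecture --- absorb the character and the weight gap by a form $E\in M_{(p-1)/2}(\Gamma_0^2(p),\chi_p)$ with $E\equiv 1\pmod p$, then push $F=E\cdot\vartheta_S^{(2)}$ down to level one by an averaged trace --- is indeed the general shape of the B\"ocherer--Nagaoka level-lowering machinery, but as written it has genuine gaps at both steps.

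First, your justification of $E\equiv 1\pmod p$ does not hold up. The reduction ``via Siegel's $\Phi$-operator and induction on the degree'' cannot work, because $\Phi$ only controls the singular Fourier coefficients: the rank-$2$ coefficients of a degree-$2$ Eisenstein series are not determined by its image under $\Phi$, and their divisibility by $p$ requires a separate computation with local densities. Even in degree $1$ your Bernoulli condition is backwards: writing $E_{k,\chi_p}=1-\frac{2k}{B_{k,\chi_p}}\sum_n\sigma_{k-1,\chi_p}(n)q^n$ with $k=\frac{p-1}{2}$, one has $\chi_p(d)d^{k-1}\equiv d^{p-2}\equiv d^{-1}\pmod p$, so the sums $\sigma_{k-1,\chi_p}(n)$ are generically units mod $p$; the congruence $E\equiv 1$ therefore forces $v_p(B_{k,\chi_p})=-1$ (true here because $\chi_p\equiv\omega^{(p-1)/2}$, the ``trivial branch'' of the von Staudt--Clausen phenomenon), not that $B_{(p-1)/2,\chi_p}$ is a $p$-unit as you assert --- if it were a unit, the normalized higher coefficients would not vanish mod $p$. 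Second, and more seriously, the trace step is exactly where the theorem lives and you leave it open: the non-identity cosets of $\Gamma_0^2(p)\backslash\Gamma_2$ contribute expansions carrying negative powers of $p$ from the automorphy factor $\det(C Z+D)^{-(p+1)/2}$ at the Atkin--Lehner-type representatives, so those contributions are not even $p$-integral a priori, let alone divisible by $p$. The standard cure is to force deep vanishing of $F$ mod powers of $p$ at the cusp $0$ by multiplying in further forms congruent to $1$, but that inflates the weight and destroys the exact value $\frac{p+1}{2}$ claimed in the statement; reconciling these two demands is the actual content of \cite{B-N-2} and is not addressed by your sketch.
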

\section{Main result}
\label{main}
\subsection{Statement of the main result}
As we stated in Introduction, the main purpose of this note is to show that the
theta series associated with the Leech lattice satisfies a congruence relation.
\begin{Thm}
\label{mainTH}
Let $a(\vartheta_{\mathcal{L}}^{(2)};T)$ denote the Fourier coefficient of $\vartheta_{\mathcal{L}}^{(2)}$.
If ${\rm det}(T)\not\equiv 0 \pmod{23}$, then
\[
a(\vartheta_{\mathcal{L}}^{(2)};T) \equiv 0 \pmod{23},
\]
or equivalently,
\[
\varTheta (\vartheta_{\mathcal{L}}^{(2)}) \equiv 0 \pmod{23}.
\]
\end{Thm}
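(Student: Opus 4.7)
My plan is to implement the first of the two strategies sketched in the introduction, combining the B\"ocherer--Nagaoka theta-operator theorem with a Sturm-type bound. Since $\vartheta_{\mathcal{L}}^{(2)}\in M_{12}(\Gamma_2)_{\mathbb{Z}}$ and $23\ge n+3$ for $n=2$, Theorem~\ref{B-N-1} produces a cusp form $G\in S_{36}(\Gamma_2)_{\mathbb{Z}_{(23)}}$ with $\varTheta(\vartheta_{\mathcal{L}}^{(2)})\equiv G\pmod{23}$. The theorem therefore reduces to showing $G\equiv 0\pmod{23}$, and this is exactly what Corollary~\ref{weight36} is designed for: it suffices to check
\[
\det(T)\cdot a(\vartheta_{\mathcal{L}}^{(2)};T)\equiv 0\pmod{23}
\]
for every half-integral $T=[m,r,n]\ge 0$ with $\mathrm{tr}(T)=m+n\le 6$.

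I would then exhaust the resulting finite list of matrices. If $m=0$ or $n=0$, positive semi-definiteness forces $r=0$, so $\det(T)=0$ and the congruence is automatic. If $m=1$ or $n=1$, any pair $(x_1,x_2)\in\mathcal{L}^2$ whose Gram matrix equals $2T$ would have to include a vector of squared length $2$; but $\mathcal{L}$ has no roots, so $a(\vartheta_{\mathcal{L}}^{(2)};T)=0$. Exploiting the $GL_2(\mathbb{Z})$-invariance $a(\vartheta_{\mathcal{L}}^{(2)};[m,r,n])=a(\vartheta_{\mathcal{L}}^{(2)};[n,r,m])$, what actually remains is the short list $(m,n)\in\{(2,2),(2,3),(2,4),(3,3)\}$ together with the bounded set of $r$ satisfying $r^2\le 4mn$.

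The main obstacle is precisely this final batch of cases: each Fourier coefficient $a(\vartheta_{\mathcal{L}}^{(2)};T)$ equals the number of pairs $(x_1,x_2)\in\mathcal{L}^2$ with Gram matrix $2T$, and these counts have to be extracted directly from the combinatorics of the Leech lattice (for instance through its construction from the binary Golay code, or through the decomposition of its short-vector shells into $Co_0$-orbits) and then checked modulo~$23$. An appealing alternative, avoiding any ad hoc lattice counting, is to use Corollary~\ref{weight12} to identify $\vartheta_{\mathcal{L}}^{(2)}$ as an explicit element of the low-dimensional space $M_{12}(\Gamma_2)$ in a standard basis built from Eisenstein series and Igusa cusp forms, and then simply read off the required Fourier coefficients from that expansion to confirm that $23\mid\det(T)\cdot a(\vartheta_{\mathcal{L}}^{(2)};T)$ in each remaining case.
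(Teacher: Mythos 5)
Your proposal is correct and follows essentially the same route as the paper's first proof: apply Theorem \ref{B-N-1} with $p=23$ to replace $\varTheta(\vartheta_{\mathcal{L}}^{(2)})$ by a cusp form $G\in S_{36}(\Gamma_2)_{\mathbb{Z}_{(23)}}$, then invoke Corollary \ref{weight36} after verifying the finitely many coefficients with $\mathrm{tr}(T)\le 6$, which the paper does via Table \ref{coefficients}. Your reduction of the finite check (semi-definiteness for $m=0$ or $n=0$, the no-roots property for $m=1$ or $n=1$, and the Igusa-generator expansion for the remaining cases, which the paper supplies as Theorem \ref{mainTH3}) is a sound way to carry out the same verification.
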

\subsection{The first proof}
\label{first}
In this subsection we present a proof of Theorem \ref{mainTH} using a property of the theta operator.
\begin{proof}
We apply Theorem \ref{B-N-1} in the case that 
\[
F=\vartheta_{\mathcal{L}}^{(2)}\qquad\text{and}\qquad p=23.
\]
From this, we can find a Siegel cusp form $G\in S_{36}(\Gamma_2)$ such that
\[
\varTheta (\vartheta_{\mathcal{L}}^{(2)}) \equiv G \pmod{23}.
\]
By the Table \ref{coefficients} in $\S 4$, we can confirm
\[
a(\varTheta (\vartheta_{\mathcal{L}}^{(2)});T) \equiv a(G;T) \equiv 0 \pmod{23}
\]
for any $0\leq T\in Sym_2^*(\mathbb{Z})$ with $\text{tr}(T)\leq 6$. Then, by Corollary 
\ref{weight36}, we obtain
\[
G \equiv 0 \pmod{23}.
\]
This means that
\[
\varTheta (\vartheta_{\mathcal{L}}^{(2)}) \equiv 0 \pmod{23}.
\]
\end{proof}
\subsection{The second proof}
\label{second}
In this subsection, we give the second proof of our main theorem, which is based on a
congruence between theta series.

\begin{Thm}
\label{mainTH2}
The following congruence relation holds.
\[
\vartheta_{[2,1,3]}^{(2)} \equiv \vartheta_{\mathcal{L}}^{(2)}
\pmod{23},
\]
or equivalently,
\[
\varTheta (\vartheta_{\mathcal{L}}^{(2)}) \equiv 0 \pmod{23}.
\]
\end{Thm}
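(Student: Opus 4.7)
The plan is to reduce the congruence to a finite Sturm-type check between two weight-$12$ Siegel modular forms of level one that are congruent modulo $23$ to the two theta series appearing in the statement.

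First, note that $S_0:=[2,1,3]$ has $\det(2S_0)=24-1=23$, so it is a positive definite binary quadratic form of discriminant $-23$. Since $23\geq 7$ and $23\equiv 3\pmod 4$, Theorem \ref{B-N-2} produces a modular form $G\in M_{12}(\Gamma_2)_{\mathbb{Z}_{(23)}}$ (note $(p+1)/2=12$) such that $\vartheta_{[2,1,3]}^{(2)}\equiv G\pmod{23}$. Since $\vartheta_{\mathcal{L}}^{(2)}$ already lies in $M_{12}(\Gamma_2)_{\mathbb{Z}}$, the difference $F:=\vartheta_{\mathcal{L}}^{(2)}-G$ lies in $M_{12}(\Gamma_2)_{\mathbb{Z}_{(23)}}$, and Theorem \ref{C-C-K} applied with $k=12$, $p=23$ reduces the problem to checking $a(F;[m,r,n])\equiv 0\pmod{23}$ for all positive semi-definite $T=[m,r,n]$ with $0\leq m,n\leq 12/10$, that is, with $m,n\in\{0,1\}$.

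For this range the complete list of admissible $T$ is $T=0$, $T=\operatorname{diag}(1,0)$, $T=\operatorname{diag}(0,1)$, and $T=[1,r,1]$ for $r\in\{0,\pm 1,\pm 2\}$. Both theta series have constant term $1$, so the constant Fourier coefficient of $F$ vanishes. For every remaining $T$ some diagonal entry equals $1$, which forces $a(\vartheta_{\mathcal{L}}^{(2)};T)=0$ by the no-roots property of $\mathcal{L}$ already recorded in $\S$\ref{thetaLeech}, and forces $a(\vartheta_{[2,1,3]}^{(2)};T)=0$ because the binary form $2a^{2}+ab+3b^{2}$ attached to $S_0$ has minimum nonzero value $2$ and hence does not represent $1$. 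Consequently $a(F;T)\equiv 0\pmod{23}$ for all the required indices, and Theorem \ref{C-C-K} yields $F\equiv 0\pmod{23}$, giving $\vartheta_{[2,1,3]}^{(2)}\equiv\vartheta_{\mathcal{L}}^{(2)}\pmod{23}$.

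For the equivalence with $\varTheta(\vartheta_{\mathcal{L}}^{(2)})\equiv 0\pmod{23}$, note that whenever $a(\vartheta_{[2,1,3]}^{(2)};T)\neq 0$ one can write $T={}^{t}XS_0X$ for some $X\in M_{2,2}(\mathbb{Z})$, and then $4\det T=\det(2T)=(\det X)^{2}\det(2S_0)=23(\det X)^{2}$; since $4$ is a unit modulo $23$, this forces $\det T\cdot a(\vartheta_{[2,1,3]}^{(2)};T)\equiv 0\pmod{23}$, so $\varTheta(\vartheta_{[2,1,3]}^{(2)})\equiv 0\pmod{23}$, and combining with the first congruence gives $\varTheta(\vartheta_{\mathcal{L}}^{(2)})\equiv 0\pmod{23}$. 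The only non-routine ingredient is Theorem \ref{B-N-2}, which replaces the level-$23$ form $\vartheta_{[2,1,3]}^{(2)}$ by a level-one companion on which the Sturm bound bites; the remaining work is an elementary tally of short vectors in $\mathcal{L}$ and in the lattice attached to $S_0$.
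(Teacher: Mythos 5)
Your proof is correct and follows the same route as the paper: Theorem \ref{B-N-2} supplies a level-one weight-$12$ companion $G$ of $\vartheta_{[2,1,3]}^{(2)}$, and the Sturm-type Theorem \ref{C-C-K} reduces the congruence $G\equiv\vartheta_{\mathcal{L}}^{(2)}\pmod{23}$ to a finite coefficient check. The only differences are to your credit: where the paper verifies the coefficients numerically via Table \ref{coefficients} up to trace $6$, you observe that in the minimal Sturm range $m,n\in\{0,1\}$ every nonzero index has a diagonal entry equal to $1$, so the required vanishing follows structurally from the absence of roots in $\mathcal{L}$ and from the minimum $2$ of the form $2a^{2}+ab+3b^{2}$; and you also supply the determinant argument showing $\varTheta(\vartheta_{[2,1,3]}^{(2)})\equiv 0\pmod{23}$, a step the paper asserts without proof.
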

\begin{proof}
We apply Theorem \ref{B-N-2} in the case $p=23$. 
Then we see that there is a modular form $G\in M_{12}(\Gamma_2)_{\mathbb{Z}_{(23)}}$
such that
\[
\vartheta_{[2,1,3]}^{(2)} \equiv G \pmod{23}.
\]
By the Tables \ref{coefficients} in $\S 4$, we can confirm that
\[
a(\vartheta_{[2,1,3]}^{(2)};T) \equiv a(G;T) \equiv a(\vartheta_{\mathcal{L}}^{(2)};T)\pmod{23}
\]
for any $0\leq T\in Sym_2^*(\mathbb{Z})$ with $\text{tr}(T)\leq 6$. This shows that
\[
G \equiv \vartheta_{\mathcal{L}}^{(2)} \pmod{23}.
\]
Since
\[
\varTheta (\vartheta_{[2,1,3]}^{(2)}) \equiv 0 \pmod{23},
\]
we obtain
\[
\varTheta (\vartheta_{\mathcal{L}}^{(2)}) \equiv 0 \pmod{23}.
\]
\end{proof}
\begin{Rem}
In the degree one case, we have already known the congruence
\[
\vartheta_{[2,1,3]}^{(1)} \equiv \vartheta_{\mathcal{L}}^{(1)}
\pmod{23},\qquad 
\]
(\cite{Skoruppa}, \,p.3).
\end{Rem}
\subsection{Igusa's generators}
\label{generators}
The theta series $\vartheta_{\mathcal{L}}^{(2)}$ is a weight 12 Siegel modular form
with integral Fourier coefficients. Therefore it can be expressed as a polynomial 
with Igusa's generators of the ring of Siegel modular forms of degree two over
$\mathbb{Z}$. In this subsection, we give the explicit form.

Let
\[
M(\Gamma_2)_{\mathbb{Z}}=\bigoplus_{k\in\mathbb{Z}}M_k(\Gamma_2)_{\mathbb{Z}}.
\]
be the graded ring of Siegel modular forms of degree 2 over $\mathbb{Z}$.
Igusa \cite{Igusa} consructed a minimal set of generators of this ring.
The set of generators consists of fifteen modular forms
\[
X_4,\,X_6,\,X_{10},\,Y_{12},\, X_{12},\ldots,X_{48}
\]
where subscripts denote their weights. Here the first two modular forms $X_k\,(k=4,6)$
are the weight $k$ Siegel-Eisenstein series:
\[
X_4=E_4^{(2)},\quad X_6=E_6^{(2)}.
\]
The modular form $X_{35}$ appearing in Introduction is one of these generators, moreover, it is
the unique odd weight generator.
\begin{Ex}
\label{Igusagenerator}
We give the Fourier expansions of the first five generators:\\
\begin{align*}
X_4 &=1+240(q_{11}+q_{22})+2160(q_{11}^2+q_{22}^2)+(30240+13440\cdot c_1+240\cdot c_2)q_{11}q_{22}\\
&+6720(q_{11}^3+q_{22}^3)+(181440+138240\cdot c_1+30240\cdot c_2)(q_{11}^2q_{22}+q_{11}q_{22}^2)+\cdots
\\
X_6 &=1-504(q_{11}+q_{22})-16632(q_{11}^2+q_{22}^2)+(166320+44352\cdot c_1-504\cdot c_2)q_{11}q_{22}\\
&-122976(q_{11}^3+q_{22}^3)+(3792096+2128896\cdot c_1+166320\cdot c_2)(q_{11}^2q_{22}+q_{11}q_{22}^2)+\cdots
\\
X_{10} &=(-2+c_1)q_{11}q_{22}+(36-16\cdot c_1-2\cdot c_2)(q_{11}^2q_{22}+q_{11}q_{22}^2)\\
&+(-272+99\cdot c_1+36\cdot c_2+c_3)(q_{11}^3q_{22}+q_{11}q_{22}^3)+\cdots
\\
X_{12} &=(10+c_1)q_{11}q_{22}+(-132-88\cdot c_1+10\cdot c_2)(q_{11}^2q_{22}+q_{11}q_{22}^2)\\
&+(736+1275\cdot c_1-132\cdot c_2+c_3)(q_{11}^3q_{22}+q_{11}q_{22}^3)+\cdots
\\
Y_{12} &=(q_{11}+q_{22})-24(q_{11}^2+q_{22}^2)+(1206+116\cdot c_1+c_2)q_{11}q_{22}\\
&+252(q_{11}^3+q_{22}^3)+(115236+22176\cdot c_1+1206\cdot c_2)(q_{11}^2q_{11}+q_{11}q_{22}^2)+\cdots,
\end{align*}
where $c_i=q_{12}^i+q_{12}^{-i}$. (We have more extended expression for each modular form.)
\end{Ex}
Here we should remark that
\[
\varPhi(X_4)=E_4^{(1)},\;\;
\varPhi(X_6)=E_6^{(1)},\;\;
\varPhi(X_{10})=\varPhi(X_{12})=0,\;\;
\varPhi (Y_{12})=\varDelta,
\]
where $\varPhi$ is the Siegel operator.
\begin{Thm}
\label{mainTH3}
Let $\vartheta_{\mathcal{L}}^{(2)}$ is the degree 2 theta series associated with the
Leech lattice $\mathcal{L}$. The we have
\[
\vartheta_{\mathcal{L}}^{(2)}=X_4^3-720Y_{12}+43200X_{12}.
\]
\end{Thm}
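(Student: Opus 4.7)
The plan is to verify the claimed identity by reducing it, via the Sturm-type bound of Corollary~\ref{weight12}, to a finite Fourier coefficient comparison. Both sides of the asserted equation belong to $M_{12}(\Gamma_2)_{\mathbb{Z}}$: the left-hand side by $\S$~\ref{thetaLeech} and the right-hand side because $X_4,Y_{12},X_{12}$ are Igusa generators with integer Fourier coefficients (Example~\ref{Igusagenerator}). Hence the difference
\[
D:=\vartheta_{\mathcal{L}}^{(2)}-\bigl(X_4^3-720\,Y_{12}+43200\,X_{12}\bigr)
\]
lies in $M_{12}(\Gamma_2)_{\mathbb{Z}}$, and by Corollary~\ref{weight12} it vanishes identically once we know $a(D;T)=0$ for every positive semi-definite $T=[m,r,n]$ with $\text{tr}(T)\leq 2$. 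This is a short list: $T=0$, $[1,0,0]$, $[0,0,1]$, $[2,0,0]$, $[0,0,2]$, together with $[1,r,1]$ for $|r|\leq 2$.

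To motivate how the particular coefficients $-720$ and $43200$ arise, I would invoke Igusa's structure theorem, which gives $M_{12}(\Gamma_2)=\mathbb{C}\langle X_4^3,X_6^2,Y_{12},X_{12}\rangle$, and write $\vartheta_{\mathcal{L}}^{(2)}=\alpha X_4^3+\beta X_6^2+\gamma Y_{12}+\delta X_{12}$ with unknowns $\alpha,\beta,\gamma,\delta\in\mathbb{Q}$. Applying the Siegel operator $\varPhi$, using $\varPhi(X_4)=E_4^{(1)}$, $\varPhi(X_6)=E_6^{(1)}$, $\varPhi(Y_{12})=\varDelta$, $\varPhi(X_{12})=0$, together with the elliptic identity $(E_6^{(1)})^2=(E_4^{(1)})^3-1728\,\varDelta$, the equation $\varPhi(\vartheta_{\mathcal{L}}^{(2)})=\vartheta_{\mathcal{L}}^{(1)}=(E_4^{(1)})^3-720\,\varDelta$ from Example~\ref{Leechdegree1} forces $\alpha+\beta=1$ and $\gamma-1728\beta=-720$. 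Matching two further Fourier coefficients of the ansatz against the tabulated Leech values, for instance at $T=[1,0,1]$ and at one of the $[1,r,1]$ with $r\neq 0$, pins down $(\alpha,\beta,\gamma,\delta)=(1,0,-720,43200)$.

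With the coefficients thus determined, the final verification is the mechanical Fourier coefficient check indicated above: on the left one reads off $a(\vartheta_{\mathcal{L}}^{(2)};T)=\#\{X\in M_{24,2}(\mathbb{Z})\mid \mathcal{L}[X]=2T\}$ from Table~\ref{coefficients} in \S 4, and on the right one expands $X_4^3$ by Cauchy multiplication from Example~\ref{Igusagenerator} and reads $Y_{12},X_{12}$ directly. The main obstacle is purely computational, namely keeping track of the $c_i=q_{12}^i+q_{12}^{-i}$ contributions when cubing the expansion of $X_4$; there is no conceptual subtlety once the Sturm-type reduction to $\text{tr}(T)\leq 2$ and the $\varPhi$-image sanity check (which already guarantees $a(D;T)=0$ for all $T$ with $\det T=0$) have been set up.
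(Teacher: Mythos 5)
Your argument is correct and is essentially the paper's own proof: both rest on Igusa's structure theorem to produce the ansatz $\alpha X_4^3+\beta X_6^2+\gamma Y_{12}+\delta X_{12}$ and then settle the identity by a finite comparison of Fourier coefficients. The one place you go beyond the paper is in making the finiteness explicit: the paper only says the coefficients are obtained ``by comparing the Fourier coefficients of both sides'', whereas you reduce the verification of $D=0$ to the matrices $T$ with $\mathrm{tr}(T)\le 2$ via Corollary~\ref{weight12}, and you note that $\varPhi(D)=0$ (from Example~\ref{Leechdegree1}) already disposes of all singular $T$; this is a genuine tightening, and the remaining checks at $[0,0,0]$, $[1,0,0]$, $[2,0,0]$ and $[1,r,1]$, $\lvert r\rvert\le 2$, do go through with the data of Example~\ref{Igusagenerator} and Table~\ref{coefficients}. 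One correction to your motivational paragraph: since $\dim_{\mathbb{C}}M_{12}(\Gamma_2)=3$, the four monomials are linearly dependent --- one has $X_4^3-X_6^2-1728\,Y_{12}+248832\,X_{12}=0$ --- so no collection of Fourier-coefficient conditions can ``pin down'' $(\alpha,\beta,\gamma,\delta)$ uniquely; concretely, after imposing $\alpha+\beta=1$ and $\gamma-1728\beta=-720$, the conditions at $[1,0,1]$ and $[1,1,1]$ both collapse to the single relation $\delta=43200-248832\,\beta$. This does not damage the proof, because your final Sturm-type verification only requires the specific representative $(1,0,-720,43200)$ to be checked, not that it be the unique one; but ``pins down'' should be weakened to ``is consistent with'', and the same non-uniqueness is silently present in the paper's own proof.
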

\begin{proof}
As a matter of course, Theorem \ref{mainTH3} can be obtained by the direct calculations
of the Fourier coefficients of $X_4$, $X_6$, $X_{12}$, and $Y_{12}$. By Igusa's structure
theorem over $\mathbb{Z}$, we can warite as
\[
\vartheta_{\mathcal{L}}^{(2)}=a_1X_4^3+a_2X_6^2+a_3X_{12}+a_4Y_{12},
\]
with $a_i\in\mathbb{Z}\,(1\leq i\leq 4)$. By comparing the Fourier coefficients of both
sides (cf. Example \ref{Igusagenerator} and $\S$\ref{numericalexamples}), we obtain
\[
a_1=1,\quad a_2=0,\quad a_3=43200,\quad a_4=-720.
\]
\end{proof}
\subsection{Another congruence}
\label{anothercong}
In this subsection, we introduce another congruence satisfied by the theta series
$\vartheta_{\mathcal{L}}^{(2)}$.
\begin{Thm}
\label{anothercongruence}
The following congruence relation holds.
\[
\vartheta_{\mathcal{L}}^{(2)} \equiv 1 \pmod{13}.
\]
\end{Thm}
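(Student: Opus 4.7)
The plan is to compare $\vartheta_{\mathcal{L}}^{(2)}$ with a weight~12 Siegel modular form which is manifestly congruent to $1$ modulo $13$, and then apply the Sturm-type bound of Theorem~\ref{C-C-K}. The natural such form is the Siegel--Eisenstein series $E_{12}^{(2)} \in M_{12}(\Gamma_2)$.

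Since $12 = p-1$ for $p = 13$ and the denominator of $B_{12}$ is divisible by $13$, the degree-$n$ analog of the classical Clausen--von Staudt congruence $E_{p-1}^{(1)} \equiv 1 \pmod p$ yields both $E_{12}^{(2)} \in M_{12}(\Gamma_2)_{\mathbb{Z}_{(13)}}$ and $E_{12}^{(2)} \equiv 1 \pmod{13}$. This is precisely the kind of weight $p-1$ constant-reduction phenomenon alluded to in the introduction with reference to \cite{B-N-1}.

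Granting this, I would set
\[
F := \vartheta_{\mathcal{L}}^{(2)} - E_{12}^{(2)} \in M_{12}(\Gamma_2)_{\mathbb{Z}_{(13)}}
\]
and reduce the theorem to showing $F \equiv 0 \pmod{13}$. Applying Theorem~\ref{C-C-K} with $k = 12$ and $p = 13$ gives the bound $k/10 = 1.2$, so it suffices to verify
\[
a(\vartheta_{\mathcal{L}}^{(2)};[m,r,n]) \equiv a(E_{12}^{(2)};[m,r,n]) \pmod{13}
\]
for $m,n \in \{0,1\}$ and integers $r$ with $r^2 \leq 4mn$; that is, only for $T = 0,\, [1,0,0],\, [0,0,1]$ and $[1,r,1]$ with $r \in \{-2,-1,0,1,2\}$. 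This is a handful of comparisons, using Table~\ref{coefficients} on the theta side and the standard Fourier expansion of $E_{12}^{(2)}$ on the same matrices; combined with $E_{12}^{(2)} \equiv 1$ one concludes $\vartheta_{\mathcal{L}}^{(2)} \equiv 1 \pmod{13}$.

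The principal obstacle I foresee is establishing the $13$-integrality and mod-$13$ constancy of $E_{12}^{(2)}$ in the correct normalization, since the Siegel--Eisenstein Fourier coefficients involve the zeta values $\zeta(1-12)$ and $\zeta(1-22)$ rather than being immediate; the needed statement is available in \cite{B-N-1}. Once this is in hand, the Sturm comparison is routine, and the explicit decomposition of Theorem~\ref{mainTH3}---which, after reducing coefficients modulo $13$, reads $\vartheta_{\mathcal{L}}^{(2)} \equiv X_4^3 + 8\,Y_{12} + X_{12} \pmod{13}$---provides a practical way to compute any Fourier coefficients needed in the comparison.
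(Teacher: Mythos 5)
Your proposal matches the paper's own proof: both compare $\vartheta_{\mathcal{L}}^{(2)}$ with the Siegel--Eisenstein series $E_{12}^{(2)}\equiv 1\pmod{13}$ and close the gap with the Sturm-type bound of Theorem~\ref{C-C-K}, using the vanishing of the coefficients at matrices $[1,r,1]$ and $[1,0,0]$ (no roots in the Leech lattice). Your version is in fact slightly sharper in isolating exactly which coefficients need checking ($m,n\le 1$ rather than $\mathrm{tr}(T)\le 6$) and in flagging that the normalization and $13$-integrality of $E_{12}^{(2)}$ is the one point requiring citation.
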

\begin{proof}
It is known that the weight 12 Siegel Eisenstein series $E_{12}^{(2)}$ has the property
\[
E_{12}^{(2)} \equiv 1 \pmod{13}.
\]
On the other hand, we can confirm that
\[
a(\vartheta_{\mathcal{L}}^{(2)};o_2)=1\quad \text{and}\quad a(\vartheta_{\mathcal{L}}^{(2)};T)
\equiv 0 \pmod{13}
\]
for $0\leq T\in\Lambda_2$ with $\text{tr}(T)\leq 6$. This shows that
\[
\vartheta_{\mathcal{L}}^{(2)} \equiv E_{12}^{(2)} \equiv 1 \pmod{13}.
\]
\end{proof}
\begin{Rem}
Of course, the congruence in Theorem \ref{anothercongruence}, means that
\[
a(\vartheta_{\mathcal{L}}^{(2)};T) \equiv 0 \pmod{13}
\]
for any $0_2\ne T\in\Lambda_2$.
\end{Rem}

\section{Numerical Examples}
\label{numericalexamples}
In this section, we present numerical examples of the Fourier coefficients
of $\vartheta_{\mathcal{L}}^{(2)}$ and $\vartheta_{[2,1,3]}^{(2)}$, which
is used in our proof of the main results.
\begin{Ex} Fourier expansion of $\vartheta_{\mathcal{L}}^{(2)}$\\
\label{FourierLeech}
\vspace{2mm}
\\
$\vartheta_{\mathcal{L}}^{(2)}=1+196560(q_{11}^2+q_{22}^2)$
\vspace{2mm}
\\
$+16773120(q_{11}^3+q_{22}^3)$
\vspace{2mm}
\\
$+398034000(q_{11}^4+q_{22}^4)$
\vspace{2mm}
\\
$+(18309564000+9258762240\cdot c_1+904176000\cdot c_2+196560\cdot c_4)q_{11}^2q_{22}^2$
\vspace{2mm}
\\
$+4629381120(q_{11}^5+q_{22}^5)$
\vspace{2mm}
\\
$+(1273079808000+815173632000\cdot c_1+187489935360\cdot c_2+9258762240\cdot c_3)$
\vspace{2mm}
\\
$\cdot(q_{11}^3q_{22}^2+q_{11}^2q_{22}^3)$
\vspace{2mm}
\\
$+34417656000(q_{11}^6+q_{22}^6)$
\vspace{2mm}
\\
$+(26182676520000+18748993536000\cdot c_1+6444966528000\cdot c_2+815173632000\cdot c_3+18309564000\cdot c_4)
(q_{11}^4q_{22}^2+q_{11}^2q_{22}^4)$
\vspace{2mm}
\\
$+(88768382976000+65996457246720\cdot c_1+25779866112000\cdot c_2
 +4320755712000\cdot c_3+187489935360\cdot c_4+16773120\cdot c_6)q_{11}^3q_{22}^3+\cdots$,\\
\\
where $c_i:=q_{12}^i+q_{12}^{-i}$.
\end{Ex}
\begin{Tab} Fourier coefficients of $\vartheta_{\mathcal{L}}^{(2)}$ and $\vartheta_{[2,1.3]}^{(2)}$
\label{coefficients}
\begin{table}[hbtp]
\begin{tabular}{cccc}
$\boldsymbol{T=[m,r,n]}$  &  $\textbf{tr}\boldsymbol{(T)}$  &  $\boldsymbol{ a(\vartheta_{[2,1,3]}^{(2)};T)}$
                                                        &  $\boldsymbol{ a(\vartheta_{\mathcal{L}}^{(2)};T)}$\\ [2mm]\hline\hline
[0,0,0]     &        0      &         1       &   1 \\ \hline
[1,0,0]     &        1      &         0        &   0 \\ \hline
[2,0,0]     &        2      &         2        &   $196560=2^4\cdot 3^3\cdot 5\cdot 7\cdot 13$ \\ \hline
[1,1,1]     &        2      &         0        &   0 \\ \hline
[1,0,1]     &        2      &         0        &   0 \\ \hline
[3,0,0]     &        3      &         2        &   $16773120=2^{12}\cdot 3^2\cdot 5\cdot 7\cdot 13$ \\ \hline
[2,2,1]     &        3      &         0        &   0 \\ \hline
[2,1,1]     &        3      &         0        &   0 \\ \hline
[2,0,1]     &        3      &         0        &   0 \\ \hline
[4,0,0]     &        4      &         2        &   $398034000=2^4\cdot 3^7\cdot 5^3\cdot 7\cdot 13$ \\ \hline
[3,3,1]     &        4      &         0        &   0 \\ \hline
[3,2,1]     &        4      &         0        &   0 \\ \hline
[3,1,1]     &        4      &         0        &   0 \\ \hline
[3,0,1]     &        4      &         0        &   0 \\ \hline
[2,4,2]     &        4      &         2        &   $196560=2^4\cdot 3^3\cdot 5\cdot 7\cdot 13$ \\ \hline
[2,3,2]     &        4      &         0        &   0 \\ \hline
[2,2,2]     &        4      &         0        &   $904176000=2^7\cdot 3^3\cdot 5^3\cdot 7\cdot 13\cdot\underline{23}$ \\ \hline
[2,1,2]     &        4      &         0        &   $9258762240=2^{15}\cdot 3^3\cdot 5\cdot 7\cdot 13\cdot\underline{23}$ \\ \hline
[2,0,2]     &        4      &         0        &   $18309564000=2^5\cdot 3^7\cdot 5^3\cdot 7\cdot 13\cdot\underline{23}$ 
                                                   \\ \hline
[5,0,0]     &        5      &         0        &   $4629381120=2^{14}\cdot 3^3\cdot 5\cdot 7\cdot 13\cdot \underline{23}$
                                                   \\ \hline
[4,4,1]     &        5      &         0        &   0 \\ \hline
[4,3,1]     &        5      &         0        &   0 \\ \hline
[4,2,1]     &        5      &         0        &   0 \\ \hline
[4,1,1]     &        5      &         0        &   0 \\ \hline
[4,0,1]     &        5      &         0        &   0 \\ \hline
[3,4,2]     &        5      &         0        &   0 \\ \hline
[3,3,2]     &        5      &         0        &   $9258762240=2^{15}\cdot 3^3\cdot 5\cdot 7\cdot 13\cdot \underline{23}$
                                                   \\ \hline
[3,2,2]     &        5      &         0        &   $187489935360=2^{13}\cdot 3^7\cdot 5\cdot 7\cdot 13\cdot \underline{23}$
                                                   \\ \hline
[3,1,2]     &        5      &         2        &   $815173632000=2^{15}\cdot 3^7\cdot 5^3\cdot 7\cdot 13$
                                                   \\ \hline
[3,0,2]     &        5      &         0        &   $1273079808000=2^{14}\cdot 3^3\cdot 5^3\cdot 7\cdot 11\cdot 13\cdot 
                                                                \underline{23}$ \\ \hline
[6,0,0]     &        6      &         2        &   $34417656000=2^6\cdot 3^3\cdot 5^3\cdot 7\cdot 13\cdot 17\cdot 103$
                                                   \\ \hline
[5,4,1]     &        6      &         0        &   0 \\ \hline
[5,3,1]     &        6      &         0        &   0 \\ \hline
[5,2,1]     &        6      &         0        &   0 \\ \hline
[5,1,1]     &        6      &         0        &   0 \\ \hline
[5,0,1]     &        6      &         0        &   0 \\ \hline
[4,5,2]     &        6      &         0        &   0 \\ \hline
[4,4,2]     &        6      &         0        &   $18309564000=2^5\cdot 3^7\cdot 5^3\cdot 7\cdot 13\cdot\underline{23}$ 
                                                   \\ \hline
[4,3,2]     &        6      &         2        &   $815173632000=2^{15}\cdot 3^7\cdot 5^3\cdot 7\cdot 13$
                                                   \\ \hline       
[4,2,2]     &        6      &         0        &   $6444966528000=2^{10}\cdot 3^7\cdot 5^3\cdot 7\cdot 11\cdot 13\cdot
                                                   \underline{23}$ \\ \hline
[4,1,2]     &        6      &         0        &   $18748993536000=2^{15}\cdot 3^7\cdot 5^3\cdot 7\cdot 13\cdot
                                                   \underline{23}$ \\ \hline
[4,0,2]     &        6      &         0        &   $26182676520000=2^6\cdot 3^7\cdot 5^4\cdot 7\cdot 11\cdot 13^2\cdot
                                                   \underline{23}$ \\ \hline
[3,6,3]     &        6      &         2        &   $16773120=2^{12}\cdot 3^2\cdot 5\cdot 7\cdot 13$ \\ \hline
[3,5,3]     &        6      &         0        &   0 \\ \hline
[3,4,3]     &        6      &         0        &   $187489935360=2^{13}\cdot 3^7\cdot 5\cdot 7\cdot 13\cdot \underline{23}$
                                                   \\ \hline
[3,3,3]     &        6      &         0        &   $4320755712000=2^{18}\cdot 3^2\cdot 5^3\cdot 7^2\cdot 13\cdot
                                                   \underline{23}$ \\ \hline
[3,2,3]     &        6      &         0        &   $25779866112000=2^{12}\cdot 3^7\cdot 5^3\cdot 7\cdot 11\cdot 13\cdot
                                                   \underline{23}$ \\ \hline
[3,1,3]     &        6      &         0        &   $65996457246720=2^{18}\cdot 3^7\cdot 5\cdot 7\cdot 11\cdot 13\cdot
                                                   \underline{23}$ \\ \hline
[3,0,3]     &        6      &         0        &   $88768382976000=2^{14}\cdot 3^3\cdot 5^3\cdot 7\cdot 13^2\cdot
                                                   \underline{23}\cdot 59$ \\ \hline
\end{tabular}
\end{table}
\end{Tab}


Shoyu Nagaoka\\
Department of Mathematics\\
Kinki University\\
Higashi-Osaka, Osaka 577-8502, Japan\\
Email:nagaoka@math.kindai.ac.jp\\
and\\
Sho Takemori\\
Department of Mathematics\\
Hokkaido University\\
Kita 10, Nishi 8, Kita-Ku\\
Sapporo, Hokkaido, 060-0810, Japan\\
E-mail: stakemorii@gmail.com

\textbf{Acknowledgment}.  The second author is partially supported by the
Grants-in-aid (S) (No. 23224001).


\begin{thebibliography}{9}


\bibitem{B-N-1} S.~B\"{o}cherer, S.~Nagaoka,
On mod $p$ properties of Siegel modular forms,
Math. Ann. \textbf{338}, 421-433(2007)


\bibitem{B-N-2} S.~B\"{o}cherer, S.~Nagaoka,
On Siegel modular forms of level $p$ and their properties mod $p$,
manuscripta math. \textbf{132}, 501-515(2007)



\bibitem{C-C-K} D.~Choi, Y.~Choie, T.~Kikuta, 
Sturm type theorem for Siegel modular forms of genus 2 modulo $p$, 
Acta Arith. \textbf{158}, 129-139(2013)

\bibitem{K-K-N} T.~Kikuta, H.~Kodama, S.~Nagaoka,
Note on Igusa's cusp form of weight 35,
to appear in Rocky Mountain J. arXiv:1212.5516[math. NT]

\bibitem{Ebeling} W.~Ebeling,
Lattices and Codes,
Advanced Lectures in Math. Vieweg, 1994


\bibitem{Igusa} J.-I.~Igusa,
On the ring of modular forms of degree two over $\boldsymbol{Z}$,
Amer. J. Math. \textbf{101},149-183(1979)

\bibitem{Skoruppa} N-P. Skoruppa,
Reduction mod $\ell$ of theta series of level $\ell^n$,
arXiv:807.4694v2[math. NT]

\bibitem{Serre} J.-P. Serre,
Formes modulaires et fonctions z\^eta $p$-adiques,
Modular Functions of One Variable III(Antwerp),
Lecture Notes in Math. Vol. 350, 191-268, Springer 1972.
\end{thebibliography}
\end{document}